\renewcommand{\P}{\mathbb{P}}
\renewcommand{\d}{\mathrm{d}}
\newcommand{\hawkesorder}[1]{} 
\DeclareMathOperator{\Pois}{\mathrm{Pois}}
\DeclareMathOperator{\Z}{\mathbb{Z}}
\DeclareMathOperator{\N}{\mathbb{N}}
\DeclareMathOperator{\E}{\mathbb{E}}
\DeclareMathOperator{\R}{\mathbb{R}}
\DeclareMathOperator{\Var}{\mathrm{Var}}
\DeclareMathOperator{\Cov}{\mathrm{Cov}}
\renewcommand{\[}{\left[}
\renewcommand{\]}{\right]}
\renewcommand{\d}{\mathrm{d}}
\renewcommand{\N}{\mathbb{N}}
\newtheorem{definition}{Definition}
\newtheorem{theorem}{Theorem}
\newtheorem{conjecture}{Conjecture}
\newtheorem{example}{Example}
\renewcommand\footnotemark{}
\theoremstyle{definition}
\begin{document}
\date{\today}
\title{A note on critical Hawkes processes}
\author{Matthias Kirchner}
\maketitle

\begin{abstract}
Let $F$ be a distribution function on $\R$ with $F(0) = 0 $ and density $f$. Let $\tilde{F}$ be the distribution function of $X_1 - X_2$, $X_i\sim F,\, i=1,2,\text{ iid}$. We show that for a critical Hawkes process with displacement density (= `excitement function' = `decay kernel') $f$, the random walk induced by $\tilde{F}$ is necessarily transient. Our conjecture is that this condition is also sufficient for existence of a critical Hawkes process.
Our train of thought relies on the interpretation of critical Hawkes processes as cluster-invariant point processes. From this property, we identify the law of critical Hawkes processes as a limit of independent cluster operations. We establish uniqueness, stationarity, and infinite divisibility. Furthermore, we provide various constructions: a Poisson embedding, a representation as Hawkes process with renewal immigration, and a backward construction yielding a Palm version of the critical Hawkes process. We give specific examples of the constructions, where $F$ is regularly varying with tail index $\alpha\in(0,0.5)$. Finally, we propose to encode the genealogical structure of a critical Hawkes process with Kesten (size-biased) trees. The presented methods lay the grounds for the open discussion of multitype critical Hawkes processes as well as of critical integer-valued autoregressive time series. \\\begin{bfseries}Keywords\end{bfseries}: Hawkes process, critical cluster field, cluster invariance, branching random walk, renewal theory, regular variation, Kesten tree.
 \end{abstract}

\section{Introduction}
Let $(\Omega, \mathcal{F},\P)$ be a probability space. Let $\mathcal{X}\in\mathcal{B}(\R^d)$ for some $\d\in\N$, and let $\mathcal{B}(\mathcal{X})$ ($\mathcal{B}_b(\mathcal{X})$)
denote the (bounded) Borel sets of $\mathcal{X}$.
A \emph{point process on $\mathcal{X}$} is a measurable mapping $N:(\Omega, \mathcal{F})\to(M_p, \mathcal{M}_p)$, where $M_p$ denotes the space of locally finite counting measures on $\mathcal{X}$ and $\mathcal{M}_p$ is the smallest $\sigma$-algebra of $M_p$  such that all mappings $M_p\ni m\mapsto m(B),\,B\in\mathcal{B}_b(\mathcal{X}),$ are measurable. For any point process $N$ on $\R$, set $\mathcal{F}\supset\mathcal{H}_x^N:= \sigma\big(\{\omega\in\Omega: N(\omega)(B) = n\}:\ n\in \N_0, B\in \mathcal{B}_b((-\infty,x] )\big),\, x\in\R$. The filtration $(\mathcal{H}_x^N)$ is the \emph{history of $N$}. We say that $\lambda(\cdot)$ is an \emph{$(\mathcal{H}_x^N)$-intensity} of a point process $N$ if $\lambda(\cdot)$ is adapted to $(\mathcal{H}_x^N)$, is almost surely locally integrable, and
\begin{align}
\E \Big[N\big((a, b]\big) 1_A\Big] = \E \left[\int_{(a, b]}\lambda(x)\mathrm{d} x1_A\right] ,\quad a < b, A\in \mathcal{H}_a^{N}. \label{G:eq:H-intensity}
\end{align}
If  $\P[\cap_{k = 1}^n \{N(B_k) = n_k\} ] =  \P[\cap_{k = 1}^n \{N(B_k+x) = n_k\} ]$, $x\in\R$, $n_k\in\N$, $B_k\in\mathcal{B}(\R), k= 1,2,\dots, n\in\N,$ then $N$ is \emph{stationary}. 
If $\P[\cap_{t\in\R}\{N(\{t\}) \leq 1\} ] = 1$, then $N$ is \emph{simple}. 
For a simple and stationary point process $N$, we have that
$\E N(B)= \lambda\int_B\mathrm{d}x$ for some $\lambda \in[0,\infty]$. We call
$\lambda$ \emph{(average) intensity} and $\E N(\cdot)$ \emph{mean measure} of $N$. A \emph{subcritical Hawkes process} is a simple stationary point process $N$ admitting the $(\mathcal{H}_x^{N})$-intensity 
\begin{align}
\lambda(x) = \eta + m\int_{(-\infty,t)}f(x - y) N(\mathrm{d} y),\quad x\in\R,\label{G:eq:def}
\end{align}
where $\eta\in(0,\infty)$ denotes the \emph{immigration intensity}, $m\in[0,1)$ the \emph{branching coefficient}, and $f$ the \emph{displacement density} supported by $[0,\infty)$. It is well known that \eqref{G:eq:def} specifies a unique distribution on $(M_p, \mathcal{M}_p)$; see \citet{hawkes71b}. The construction of a Hawkes process involves a branching mechanism: the first step of the construction is a Poisson random measure on $\R$ with intensity $\eta$. Each of these \emph{immigrants} is the ancestor of a Hawkes family and has  $\Pois(m)$ \emph{children}. Each of these children again has $\Pois(m)$ children etc. The distance between parent and child is modeled by the displacement density $f$. All these offspring and displacement operations are independent. Each immigrant with all its \emph{descendants} forms a \emph{Hawkes family}. The Hawkes process is the random measure that counts the superposition of these Hawkes families in a given set. In other words, the Hawkes process is a sum of occupation measures of  infinitely many particles performing (one-sided) branching random walks. Taking expectations on either side of \eqref{G:eq:def} and applying stationarity, we note that the intensity of the process is $\lambda=\eta / (1 - m)$. We obtain the same result by multiplying the intensity of the immigrants, that is, $\eta$, with the expected number of points in a Hawkes family, that is, $(1-m)^{-1}$. In \citet{bremaud01}, the authors consider a set of Hawkes processes $\{N^{(m)}\}_{m\in(0,1)}$, where the immigration intensity depends on the branching coefficient $m$ by setting $\eta_m := (1 - m)\lambda,\, m\in[0,1),$ for some fixed $\lambda >0$. Obviously, the  intensity for all of these processes equals $\lambda$. The cited paper considers weak limits $N^{(m)}$ as $m\uparrow 1$. Note that increasing $m$ corresponds to thinning immigrants and---at the same time---enlarging the remaining Hawkes families. Theorem 1 in the same reference states that if
\begin{align}
  \sup\limits_{x\geq 0} f(x) x ^{1 + \alpha}\leq R\quad \text{and} \quad \lim\limits_{x\to\infty}f(x)x^{1 + \alpha} = r\label{G:eq:bremaud_conditions}
\end{align}
for some constants $r,R\in(0,\infty)$ and $\alpha\in(0,0.5)$,
then $N^{(m)}$ converges weakly to a point process $N^{(1)}$ with average intensity $\lambda$ and $(\mathcal{H}^{N^{(1)}}_x)$-intensity
\begin{align}
\lambda(x) = \int_{(-\infty, x)} f(x - y) N^{(1)}(\mathrm{d}y).\label{G:eq:critical_hawkes_equation}
\end{align}
In view of the branching construction, the existence of such a critical Hawkes process $N^{(1)}$ is somewhat paradoxical: for an arbitrary point of a Hawkes process, consider the number of its children, of its grandchildren, of its great-grandchildren, etc. This sequence forms a Galton--Watson process with offspring distribution $\Pois(m)$. However, it is well known that not only subcritical ($m < 1$) but also critical ($m = 1$) Galton--Watson processes die out almost surely; for example, see Theorem 6.1 in \citet{harris63}. Consequently, in a critical Hawkes process, any realized point will almost surely have only a finite number of descendants. For illustration, consider $F(0-) = 0$ and $F(0) = 1$ for the displacement distribution (for the sake of example, we disregard simplicity of the process). In this case, the position of each point coincides with the position of its descendants. Therefore, if the immigrants are thinned, then all descendants are bound to vanish---and consequently the whole process with them. In fact, Proposition 1 in \citet{bremaud01} shows that $\int x\d F(x) < \infty$ already has the same effect: if the expectation of the displacements is finite, no nontrivial solution to \eqref{G:eq:critical_equation} can exist. In contrast, the conditions in \eqref{G:eq:bremaud_conditions} guarantee that the displacements are balanced in such a way that the Hawkes families grow larger and larger to fill the larger and larger gaps between the immigrants. We found that the resulting object, the critical Hawkes process, is related to many well-studied topics:
\begin{enumerate}[label = \alph*)]
\item \emph{Cluster invariance}: we observe that any critical Hawkes process must have this distributional property. This property opens the door to a complete theory presented in Chapter 12 of \citet{matthes78};
\item \emph{Renewal theory}: we observe that critical Hawkes processes can be interpreted as  Hawkes processes with renewal immigration, where the interarrival times have infinite mean. Thus, in the long run, we can no longer observe immigrants. This construction can be studied with standard tools of renewal theory as presented, e.g., in Chapter 8 of \citet{bingham87};
\item  \emph{Backward trees}: given a critical Hawkes process, we may pick any point and walk backwards the genealogical structure. In such a manner, we can (re-)construct the process and circumvent the lack of a root event (like immigrants). This method is related to the `method of backwards trees' introduced in \citet{kallenberg77};
\item \emph{Kesten trees} or \emph{size-biased trees}: the lack of immigrants---that is, root nodes---and the non-extinction property make branching representations of critical Hawkes processes not straightforward. We find that the `Kesten tree' (see, e.g., \citet{lyons95})---a special Galton-Watson tree with size-biased offspring distribution---may be used to encode the genealogical structure of a critical Hawkes process.
\end{enumerate}
We hope that these connections will help to complete the understanding of critical Hawkes processes. In particular, the methods presented offer possible directions for the open discussion of multitype critical Hawkes processes as well as of integer-valued autoregressive time series. Finally note that the present paper gives rise to the following very general conjecture that connects the existence of critical Hawkes processes with the recurrence/transience dichotomy of random walks. We say that a distribution is \emph{transient} (\emph{recurrent}) if the random walk induced by this distribution is transient (recurrent). Furthermore, for any distribution $F$, let $\tilde{F}$ denote the distribution of $X_1 - X_2$, $X_i\sim F,\, i=1,2,\text{ iid}$. We call $\tilde{F}$ the \emph{symmetrized version of F}. Note that $\tilde{F}:\R\to [0,1],\, x\mapsto \int F(x + y) \mathrm{d} F(y)$. 
\begin{conjecture}\label{G:conj:1}
Let $F$ be an absolutely continuous distribution function with $F(0) = 0 $ and density $f$. Let $\tilde{F}$ be the symmetrized version of $F$ and let $\lambda >0$. A critical Hawkes process $N$ with displacement density $f$ and average intensity $\lambda$ exists if and only if the symmetrized version of the displacement distribution $\tilde{F}$ is transient.
In this case, \eqref{G:eq:critical_hawkes_equation} specifies a unique, stationary, and infinitely divisible distribution.
\end{conjecture}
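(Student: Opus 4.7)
The necessity direction---that existence of a critical Hawkes process forces $\tilde F$ to be transient---is proven in the body of the paper. I describe a plan for sufficiency, uniqueness, stationarity, and infinite divisibility simultaneously, via the cluster-invariance programme of item~(a) in the introduction. Any law that satisfies \eqref{G:eq:critical_hawkes_equation} with intensity $\lambda$ must be invariant under the cluster operation $\mathcal{K}$ that replaces each point by an independent critical Hawkes family rooted there; it therefore suffices to construct a single such cluster-invariant law, and the uniqueness of a cluster-invariant law with prescribed intensity then follows from Chapter~12 of \citet{matthes78}.

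The candidate I would work with is the weak limit, along $m\uparrow 1$, of the subcritical Hawkes processes $N^{(m)}$ of \citet{bremaud01} with intensity $\lambda$ and immigration rate $\eta_m=(1-m)\lambda$. Stationarity and infinite divisibility come for free: the $N^{(m)}$ are stationary Poisson cluster processes, hence infinitely divisible, and both properties pass to weak limits in $(M_p,\mathcal{M}_p)$. The nontrivial step is tightness, which reduces to a uniform-in-$m$ bound on the local second factorial moment $\E[N^{(m)}(B)(N^{(m)}(B)-1)]$ for bounded Borel $B$. Via the Poisson-cluster decomposition and a differencing of two independent ancestral lines back to their most recent common ancestor, the within-cluster contribution to this moment expresses itself as an integral against
\[
U_{\tilde F}^{(m)} = \sum_{k\ge 0} m^k \tilde F^{*k},
\]
the $m$-discounted potential of the random walk induced by $\tilde F$. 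Transience of $\tilde F$ is precisely the Chung--Fuchs condition that $U_{\tilde F}^{(1)}$ is a locally finite Radon measure on $\R\setminus\{0\}$, so it furnishes the needed bound at least away from the diagonal. Once tightness is in hand, passing to the limit in the compensator formula \eqref{G:eq:H-intensity} identifies any subsequential weak limit $N$ as a solution of \eqref{G:eq:critical_hawkes_equation}; the cluster-invariance characterization then pins down the law, and the Palm version together with the backward/Kesten-tree constructions of items~(c) and~(d) may be read off as independent a~posteriori descriptions.

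I expect the main obstacle to be promoting the bare Chung--Fuchs transience dichotomy into a genuinely uniform-in-$m$ and locally integrable control on $U_{\tilde F}^{(m)}$ near the origin: a transient potential is locally finite on $\R\setminus\{0\}$ but can have a non-integrable singularity at $0$ (already for symmetric stable laws), so the within-cluster differencing above must be coupled to a simple-point-process argument that isolates and discards the diagonal contribution. In the regularly varying regime \eqref{G:eq:bremaud_conditions} this is accessible via Karamata-type estimates on $\tilde F^{*k}$ (see Chapter~8 of \citet{bingham87})---essentially what makes the Br\'emaud--Massouli\'e theorem work---whereas executing the same argument under only the bare transience assumption is the genuinely open part of the conjecture.
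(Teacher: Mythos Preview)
The statement is labelled a \emph{conjecture} in the paper, and the paper does not supply a proof of it. Necessity---transience of $\tilde F$, together with uniqueness, stationarity, and infinite divisibility of any solution---is established as Theorem~\ref{G:thm:necessary_conditions} via the cluster-invariance results of Chapter~12 in \citet{matthes78}, just as you indicate. Sufficiency, however, is left open: the paper offers several candidate constructions (the Poisson embedding of Section~\ref{G:sec:poisson_embedding}, renewal immigration in Section~\ref{G:sec:renewal_immigration}, and the Palm/backward-tree construction of Section~\ref{G:sec:backward_tree}), but none is carried through under the bare transience hypothesis; only regularly varying examples with tail index $\alpha\in(0,0.5)$ are treated. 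Your flagging of sufficiency as ``the genuinely open part of the conjecture'' is therefore exactly right, and there is no ``paper's own proof'' to compare against.

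Your proposed route---tightness of $\{N^{(m)}\}_{m<1}$ via a uniform second-moment bound---is a sensible complement to the paper's constructive approaches, and is closest in spirit to Theorem~\ref{G:thm:necessary_conditions}\ref{G:item:dbn_limit} and to \citet{bremaud01}. Two technical points need correcting, though. First, the cluster operation under which a critical Hawkes process is invariant is the \emph{one-step} operation $[F,1]$ of \eqref{G:eq:cluster_operation} (each point replaced by its $\Pois(1)$ immediate children, displaced by $F$), not ``replace each point by an independent critical Hawkes family''; the latter cluster has infinite mean and does not define a cluster field in the MKM sense. Second, your within-cluster second-moment kernel is not $\sum_{k\ge 0} m^k\tilde F^{*k}$ alone: the paper's Palm computation \eqref{G:eq:U_0}--\eqref{G:eq:palm_measure} shows that the relevant measure is $U_-*U=\tilde U*(U+U_--\delta_0)$, i.e.\ the symmetrized potential \emph{convolved with} the two-sided renewal measure. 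Transience of $\tilde F$ already gives $\tilde U(B)<\infty$ for every bounded $B$ (that is the definition), so there is no ``non-integrable singularity at $0$'' of the potential itself; the genuine obstruction is local finiteness of the convolution $\tilde U*(U+U_-)$, which is exactly what Example~\ref{G:ex:3} checks in the regularly varying case and what remains open in general. Your diagnosis of where the difficulty lies is correct in spirit, but the object you need to control is one convolution richer than you have written.
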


Note that Conjecture~\ref{G:conj:1} includes Proposition 1 of \citet{bremaud01} stating the necessary condition $\int x\mathrm{d}F(x) = \infty$ for existence of a critical Hawkes process. Indeed:
if $\int x\mathrm{d}F(x) < \infty$, we also have $\int x\mathrm{d}\tilde{F} = 0$. Consequently, by the Chung--Fuchs theorem (see \citet{feller71}, Section~XVIII.6, Lemma 1) $\tilde{F}$ is recurrent. 

\section{Perspectives on critical Hawkes processes}\label{G:sec:perspectives_on_critical_hawkes_processes}
For any distribution $F$, $m\in(0,\infty)$, and any point process $N\sim L$ with points $\{T_n\}_{n\in I}$, $I\subset\Z$, we define the  \emph{(Poisson) clustering operation}
\begin{align}\label{G:eq:cluster_operation}
[F,m]\star \big\{T_n\}_{n\in I} := \bigcup_{n \in I }\bigcup_{k = 1}^{K_{n}} \big\{T_n + X_{n,k}\big\},
\end{align} 
where $\{K_n, X_{n,k}:\, k\in\N,n\in\Z\}$ are independent random \emph{cluster variables} (also independent of ${N}$) with $K_n\sim\Pois(m)$ and $X_{n,k}\sim F.$ We denote the point process resulting from \eqref{G:eq:cluster_operation} by $N_{[F,m]} $ and its distribution by $ L_{[F,m]}$. We call $[F,m]$ a \emph{cluster field induced by $F$}.
We may construct a univariate subcritical Hawkes process $N$ with immigration intensity $\eta\in(0,\infty)$, branching coefficient $m\in(0,1)$, and displacement distribution $F$ by applying this clustering operation: 
\begin{align}
N := \sum\limits_{g\geq 0}N^{(g)}
\text{ with } 
N^{(0)}\sim\mathrm{PRM}(\eta)\quad
\text{and} \quad N^{(g)}:= N^{(g-1)}_{ [F,m]_g},\quad g\in\N,
\label{G:eq:hawkes_construction}
\end{align}
where the clustering operations `$[F,m]_g\star\cdot$' are applied independently over $g\in\N$. For an absolutely continuous distribution $F$ with $F(0)=0$ and density $f$, one can show that the point process $N$ as in \eqref{G:eq:hawkes_construction} solves \eqref{G:eq:def}. If $m=1$, any construction as in \eqref{G:eq:hawkes_construction} (with $\eta>0)$ would yield an infinite average intensity of the limit process $N$. However, one can argue from \eqref{G:eq:critical_hawkes_equation} that the case $m=1$ can also be represented in terms of the cluster operation: 
 \begin{definition}\label{G:def:critical_hawkes}
 Let $F$ be a distribution on $\R$ with $F(0) = 0$ and $[F,1]$ the induced cluster field. Assume that $N$ is an ergodic 
 solution to
 \begin{align}
N =  N_{[F,1]}\label{G:eq:critical_equation}
\end{align}
with finite and constant average intensity $\lambda>0$.
Then we call $N$ a \emph{critical $(F,\lambda)$-Hawkes process}.
 \end{definition} 
If $F$ is absolutely continuous with density $f$, the critical cluster operation can be interpreted as attaching an inhomogeneous Poisson process with intensitiy $f(\cdot - T_n)$ to each point $T_n$. Thus, one can show that the critical $(F,\lambda)$-Hawkes process solves \eqref{G:eq:critical_hawkes_equation}. Vice versa, \eqref{G:eq:critical_equation} is more general than \eqref{G:eq:critical_hawkes_equation} in that the displacement distribution is not necessarily absolutely continuous. Also note that \eqref{G:eq:critical_equation} specifies a unique parent point for every point $T_n$ of $N$:  indeed, for each $n\in \Z$, there exist $n'\in\Z$ and $k\in\N$ such that $k\leq K_{n'}$ and $T_{n'} + X_{n',k} = T_n$, where the random variables $K_{n'}$ and $X_{n',k}$ stem from the clustering operation; see \eqref{G:eq:cluster_operation}. That is, the critical Hawkes process is `eating its own tail'. The trivial---yet crucial---observation is that \eqref{G:eq:critical_equation} also holds in distribution. 

 \subsection{Critical cluster fields}\label{G:eq:critical_cluster_fields}
 For any distribution $F$ on $\R$, denote by $[F]:=[F,1]$ the \emph{critical (Poisson) cluster  field} induced by $F$. Chapter 12 in \citet{matthes78} (MKM) discusses (distributions of) nontrivial stationary point processes  $N\sim L$ with the property 
 \begin{align}\label{G:eq:cluster_invariance}
 L = L_{[F]}
 \end{align} 
(in even higher generality).
If \eqref{G:eq:cluster_invariance} holds, then $N$ and its distribution $L$ are called \emph{cluster invariant with respect to $[F]$}. Furthermore, $F$ is called \emph{stable} if such a distribution $L$ exists.
From \eqref{G:eq:critical_equation}, we get that critical Hawkes processes are obviously cluster invariant with respect to the cluster field induced by their displacement distribution. Thus, we obtain several necessary conditions for the existence of critical Hawkes processes as corollaries from standard results:
 \begin{theorem}\label{G:thm:necessary_conditions}
 Let $F$ be a distribution on $[0,\infty)$ and $\lambda > 0$. Assume that a critical $(F,\lambda)$-Hawkes process $N$ as in Definition~\ref{G:def:critical_hawkes} exists. Then the following holds:
\begin{enumerate}[label = \alph*)]
\item\label{G:item:dbn_properties} Definition \ref{G:def:critical_hawkes} specifies a unique, infinitely divisible, and stationary distribution $H$ on $(M_p,\mathcal{M}_p)$.

\item\label{G:item:dbn_limit}   Let $L$ be the distribution of a Poisson random measure with finite average intensity $\lambda$. For $g\in\N$, denote $g$ independent clustering operations by `$[F^{[g]}]\star\cdot$'. Then, as $g\to \infty$, $L_{[F^{[g]}]}$ converges weakly to $H$. 

\item \label{G:item:existence_criterion} The symmetrized displacement distribution $\tilde{F}$ is transient.
\end{enumerate}
\end{theorem}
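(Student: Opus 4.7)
The plan is to read off each claim from the theory of cluster-invariant point processes in Chapter~12 of \citet{matthes78} (MKM). The key observation is that \eqref{G:eq:critical_equation} says precisely that the law $H$ of $N$ is cluster-invariant with respect to $[F]$; hence $F$ is stable in the MKM sense and $H$ is one of its $[F]$-stable laws of intensity $\lambda$. Each of the three statements is then an on-the-shelf property of $[F]$-stable distributions.

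For part~\ref{G:item:dbn_properties}, I would invoke the MKM uniqueness theorem for ergodic cluster-invariant distributions of prescribed intensity, which gives the uniqueness of $H$. Infinite divisibility drops out by iterating \eqref{G:eq:critical_equation}: since $L = L_{[F^{[g]}]}$ for every $g\in\N$ and $[F^{[g]}]\star\cdot$ is a $g$-fold independent superposition of Poisson clusters, the law $H$ decomposes into $g$ i.i.d.\ summands for every $g$. Stationarity follows because the cluster operation commutes with translations, so that for any $t\in\R$ the translate $H\circ \theta_t$ is again cluster-invariant of the same intensity, and uniqueness forces $H = H\circ \theta_t$.

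For part~\ref{G:item:dbn_limit}, note that the mean cluster size under $[F]$ equals $\E[\Pois(1)] = 1$, so $[F]\star\cdot$ preserves the average intensity and every $L_{[F^{[g]}]}$ has intensity $\lambda$. The family $\{L_{[F^{[g]}]}:\,g\in\N\}$ is then tight via the uniform control of first moment measures on bounded sets. The main step is to show that any weak subsequential limit is a fixed point of $[F]\star\cdot$; this follows from continuity of the cluster operation with respect to weak convergence of point processes together with the fact that the intensity $\lambda$ is not lost at the limit. Part~\ref{G:item:dbn_properties} then identifies every such subsequential limit with $H$, so the full sequence converges weakly to $H$.

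For part~\ref{G:item:existence_criterion}, the cleanest route is to cite the MKM classification of stable distributions, which characterizes stability of $F$ (the existence of a nontrivial cluster-invariant law of finite intensity for $[F]$) as transience of $\tilde{F}$. Since part~\ref{G:item:dbn_properties} has produced exactly such a law, $\tilde{F}$ must be transient. The hard part of a self-contained derivation lies in passing from the cluster-invariance equation to a local finiteness statement for $\sum_{k\geq 0}\tilde{F}^{*k}$: expanding the second factorial moment of $H$ via \eqref{G:eq:critical_equation} and iterating produces convolutions of $F$ with its reflection on the pair of ancestral lineages, so local finiteness of the second moment measure of $H$ translates into local finiteness of the renewal measure of $\tilde{F}$, which is equivalent to transience. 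This moment bookkeeping is what MKM carries out in general, and I would appeal to their argument rather than redo it.
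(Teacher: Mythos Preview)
Your overall framework---reduce Definition~\ref{G:def:critical_hawkes} to cluster invariance and then quote Chapter~12 of MKM---is exactly the paper's approach, and your treatment of part~\ref{G:item:existence_criterion} via the MKM stability criterion matches the paper's citation of Theorem~12.6.6 (the paper additionally notes the side condition $0<\Var(\Pois(1))<\infty$, which you should record). However, your direct argument for infinite divisibility in part~\ref{G:item:dbn_properties} has a genuine gap: the $g$-fold iterated cluster operation $[F^{[g]}]\star\cdot$ is \emph{not} a $g$-fold independent superposition. Applying $[F]$ $g$ times to $N$ produces the $g$-th generation of a branching random walk started from the atoms of $N$; there is no decomposition of this into $g$ i.i.d.\ point processes, so you cannot read off $H = H_1 * \cdots * H_g$ from $H = H_{[F^{[g]}]}$ in the way you claim.

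The paper's logical order avoids this: it first invokes MKM Theorem~12.1.4 to get that $L_{[F^{[g]}]}$ (started from a Poisson field of intensity $\lambda$) converges weakly to \emph{some} infinitely divisible limit, then uses Proposition~12.4.7 for stationarity, and finally MKM Theorem~12.4.1 to identify any ergodic cluster-invariant $H$ with that limit. Uniqueness and infinite divisibility of $H$ are therefore \emph{consequences} of the limit theorem, not proved directly. This also cleans up your part~\ref{G:item:dbn_limit}: you assert that ``the intensity $\lambda$ is not lost at the limit'', but that is precisely the nontrivial point, and in the paper's order it follows a posteriori from identifying the limit with the (by assumption nontrivial) $H$, rather than being an ingredient of the convergence proof. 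Finally, your alternative self-contained sketch for~\ref{G:item:existence_criterion} via second factorial moments is unsafe here: a critical Hawkes process need not have locally finite second moment measure (cf.\ the variance discussion for the INAR analogue in Section~\ref{G:sec:citical_INAR}), so the bookkeeping you outline may start from an infinite quantity.
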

\begin{proof}
We note that, by definition, any possible distribution $H$ of a critical $(F,\lambda)$-Hawkes process is cluster invariant with respect to $[F]$, has bounded average intensity $\lambda$, and is ergodic. In particular, $F$ is stable. The statements of the theorem then follow from results in MKM: from Theorem 12.1.4.\ in MKM, we get that $L_{[F^{[g]}]}$ as in \ref{G:item:dbn_limit} converges weakly to an infinitely divisible limit distribution. Stationarity of any possible $H$ follows from Proposition 12.4.7.\ in MKM. From Theorem 12.4.1.\ in MKM, we get that $H$ coincides with the limit distribution of $L_{[F^{[g]}]}$. Consequently, $H$ is unique and infinite divisible. We have established \ref{G:item:dbn_properties} and \ref{G:item:dbn_limit}. We obtain \ref{G:item:existence_criterion}  from Theorem 12.6.6.\ in MKM if the variance of the offspring distribution is neither zero nor infinity. This assumption obviously holds for our $\Pois(1)$ offspring.
\end{proof}
Next to the reference MKM the reader is referred to \citet{kallenberg77} and
 Chapter~13.5 in \citet{daley03}, in particular, Proposition 13.5.II.\ therein. Theorem~\ref{G:thm:necessary_conditions} shows that $H$ can be seen as a steady distributional state that is reached by iterating clustering operations. Note that existence of a critical Hawkes process together with the assumptions on the average intensity \emph{imply} stationarity. 
The observation of cluster invariance only yields \emph{necessary} conditions for the existence of a critical Hawkes process. We now turn towards possible constructions to explore \emph{sufficient} existence conditions.

\subsection{Poisson embedding}\label{G:sec:poisson_embedding}
From Theorem~\ref{G:thm:necessary_conditions}~\ref{G:item:dbn_limit}, a straightforward  construction of a solution to \eqref{G:eq:critical_equation} and thus of a critical $(F,\lambda)$-Hawkes process would be to start with a Poisson random field with intensity $\lambda$, and then iterating the clustering operations `$[F]\star\cdot$'. However, we cannot hope that the resulting sequences $N^{(g)}(B),\, B\in \mathcal{B}_b(\R)$ will converge \emph{almost surely} to a nontrivial result when we apply independent (and in particular new) cluster variables at each new step. That is, the clustering operations of possible construction steps have to depend on each other. When the displacement distribution has a density $f$, we propose the following construction based on `Poisson embedding' similar to the (subcritical) Hawkes construction in \citet{bremaud96} or Chapter 6.3 in \citet{liniger09}: let $\mathcal{N}: \Omega \to (\R\times \R_{\geq 0})$ be a Poisson random measure with intensity $1$ on $\R^2$. We call $\mathcal{N}$ the \emph{driving process}. For $\lambda>0$, set $N^{(0)}(B)  := \int_{B}\mathcal{N}\big(\mathrm{d} x\times (0, \lambda]\big),\, B\in\mathcal{B}(\R)$, and, for $g\in\N$, recursively define point processes $N^{(g)}$ by
\begin{align}\label{G:eq:poisson_embedding}
\lambda^{(g)}(x) := \int_{(-\infty, x)} f(x - y) N^{(g-1)}(\mathrm{d} y),\quad
N^{(g)}(B)  := \int_{B}\mathcal{N}\big(\mathrm{d} x\times(0, \lambda^{(g)}(x)]\big),\quad B\in\mathcal{B}(\R).
\end{align}
Obviously, the average intensity equals $\lambda$ for all $N^{(g)},\, g\in\N_0$.
 The construction steps are very similar to the clustering operations `$[F]\star\cdot$'. Note however, that the clustering operations are not independent over $g\in\N$. In addition, note that the displacements for $N^{(g)}$ also depend on the positions of $N^{(g-1)}$. Thus, the marginal distributions of the point process sequence $(N^{(g)})$ are similar but not equal to $(L_{[F^{[g]}]})$, when $L$ denotes the starting Poisson random field $N^{(0)}$. The symmetrized distribution $\tilde{F}$ comes into play when calculating second moment measures of $N^{(g)}$ recursively. We think that the transience condition in Conjecture~\ref{G:conj:1} guarantees $\sup_{g\in\N}\Var(N^{(g)}(B)) <\infty,\, B\in\mathcal{B}_b(\R)$, and thus non-triviality of the potential limit $N^{(\infty)}$. We summarize the above: 
 
 \begin{conjecture}
 For any absolutely continuous displacement distribution $F$, $\lambda > 0$, and $B\in\mathcal{B}_b(\R)$, $N^{(g)}(B)$ as in \eqref{G:eq:poisson_embedding} converges almost surely to a nonnegative integer as $g\to\infty$. These limits define a point process $N^{(\infty)}$. If $\tilde{F}$ is transient, then the average intensity of $N^{(\infty)}$ equals $\lambda$, otherwise it equals 0. Furthermore, the processes $\lambda^{(g)}$ converge almost surely pointwise to a limit $\lambda^{(\infty)}$ such that $\lambda^{(\infty)}(\cdot)$ is an $\mathcal{F}^{N^{(\infty)}}$-intensity.
 \end{conjecture}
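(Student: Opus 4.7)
The plan is to follow the strategy hinted at after \eqref{G:eq:poisson_embedding}: control the second moments of $(N^{(g)})$ via a recursion in which $\tilde f$ enters through convolutions, and then exploit the shared driving field $\mathcal N$ to promote moment bounds to almost-sure statements.

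First, by induction on $g$ and Campbell's formula for $\mathcal N$, each $N^{(g)}$ is stationary with $\E N^{(g)}(B) = \lambda|B|$ (using $\int f = 1$ and $\mathrm{supp}\, f \subset [0,\infty)$); in particular, $\E \lambda^{(g)}(x) = \lambda$. Next, splitting $\mathcal N$ above and below a cut-off height $M$ so that $\mathcal N^{>M}$ is independent of the portion determining $(N^{(g-1)}, \lambda^{(g)})$, the law of total variance combined with the key identity $\int f(x-y) f(x'-y)\, \mathrm{d}y = \tilde f(x-x')$ yields an inequality $\kappa^{(g+1)} \leq \lambda\, \tilde f + \tilde f * \kappa^{(g)}$ for the reduced covariance density $\kappa^{(g)}$ of the stationary $N^{(g)}$. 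Iterating gives the domination $\kappa^{(g)} \leq \lambda \sum_{k=1}^{g} \tilde f^{*k}$, i.e.\ $\lambda$ times the truncated potential kernel of the random walk with step density $\tilde f$.

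In the transient case, Chung--Fuchs guarantees local finiteness of the potential $U_{\tilde F} = \sum_{k \geq 1} \tilde F^{*k}$, so $\sup_g \Var N^{(g)}(B) < \infty$ for every bounded $B$. For the upgrade to almost-sure convergence, write $N^{(g)}(B) = \mathcal N(A_g)$ with $A_g := \{(x,u)\colon x \in B,\ u \leq \lambda^{(g)}(x)\}$. The $L^2$-bound combined with Markov gives $\sup_{g,\,x \in B} \lambda^{(g)}(x) < \infty$ almost surely, so only the finitely many points of $\mathcal N \cap (B \times (0,M])$ are ever relevant; an induction on these (finitely many) $x$-coordinates, together with Borel--Cantelli applied to the $L^2$-estimates of the increments, gives a.s.\ convergence to an integer-valued $N^{(\infty)}(B)$, while uniform integrability preserves the mean $\lambda|B|$. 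In the recurrent case, where $U_{\tilde F}$ is not locally finite, Theorem~\ref{G:thm:necessary_conditions}~\ref{G:item:existence_criterion} already rules out a nontrivial cluster-invariant limit; the idea is to compare the Poisson embedding with the independent-cluster sequence of Theorem~\ref{G:thm:necessary_conditions}~\ref{G:item:dbn_limit}---whose distributions $L_{[F^{[g]}]}$ converge weakly to $0$ by MKM Theorem 12.6.6---and transfer this statement through a coupling via $\mathcal N$ to obtain $N^{(g)}(B) \to 0$ in probability, hence almost surely by the integer-valued nature of the sequence.

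For the intensity statement, once $N^{(g-1)} \to N^{(\infty)}$ vaguely a.s., dominated convergence on compact windows gives $\lambda^{(g)}(x) \to \lambda^{(\infty)}(x)$ a.s.; passing to the limit in the finite-stage identity $\E[N^{(g)}((a,b]) 1_A] = \E[\int_a^b \lambda^{(g)}(x)\, \mathrm{d}x \cdot 1_A]$, valid for $A \in \mathcal H_a^{N^{(g)}}$ by the Poisson-embedding construction, shows that $\lambda^{(\infty)}$ is an $\mathcal F^{N^{(\infty)}}$-intensity. The main obstacle is the upgrade from $L^2$ or in-probability to almost-sure convergence: the sequence $(N^{(g)}(B))_g$ is not monotone in $g$, so the argument must exploit the geometric structure of $\mathcal N$ directly---showing that almost surely each fixed point of $\mathcal N$ eventually stops flipping its $N^{(g)}$-membership---which requires a careful combination of the second-moment bound and the coupling afforded by the common driving field.
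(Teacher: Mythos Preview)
The statement you are addressing is presented in the paper as a \emph{conjecture}, not as a theorem; the paper offers no proof, only the heuristic that the second-moment recursion should bring in $\tilde F$ and that transience of $\tilde F$ should keep $\sup_g\Var N^{(g)}(B)$ finite. There is thus no paper proof to compare against; what follows is an assessment of the argument you sketch.

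Your outline follows the paper's heuristic, but several steps are genuine gaps rather than routine details. First, the passage from ``$\sup_g \Var N^{(g)}(B) < \infty$'' to ``$\sup_{g,\,x\in B}\lambda^{(g)}(x) < \infty$ almost surely'' cannot come from Markov's inequality: Markov controls a single $(g,x)$ in probability, not an uncountable supremum almost surely. Second, ``Borel--Cantelli applied to the $L^2$-estimates of the increments'' presupposes summable tail probabilities for $|N^{(g+1)}(B)-N^{(g)}(B)|$, but a uniform $L^2$ bound on $N^{(g)}(B)$ yields no decay of the increments whatsoever; this is exactly the hard step the paper isolates (the sequence is not monotone, and points may keep ``flipping'' membership), and your own final paragraph concedes it is still open. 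Third, in the recurrent case you propose to transfer the MKM convergence-to-zero for the independent-cluster sequence $(L_{[F^{[g]}]})$ to the Poisson-embedding sequence via a coupling through $\mathcal N$, but the paper explicitly notes that these two sequences have \emph{different} marginal laws, so such a transfer requires an argument you have not supplied. Finally, the covariance recursion $\kappa^{(g+1)}\le \lambda\tilde f + \tilde f*\kappa^{(g)}$ itself needs more care: because $\lambda^{(g)}$ is measurable with respect to the \emph{same} driving field $\mathcal N$ used to build $N^{(g)}$, the height-$M$ split does not render $N^{(g)}$ conditionally Poisson given $\lambda^{(g)}$, and an additional correlation term must be controlled before the clean inequality can be asserted.
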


\subsection{Renewal immigration}\label{G:sec:renewal_immigration}
The average intensity of a subcritical Hawkes process equals the intensity of the immigrants times the expected number of points in a Hawkes family. Consequently, if $m = 1$, we need zero immigration intensity to obtain a locally finite mean measure. This can be thought of as immigrants stemming from a `stationary renewal process on $\R$ with infinite interarrival expectation'. We know from the Renewal Theorem (for example, see (1.9) in Chapter XI of \citet{feller71}), that the probability for observing such an immigrant in a finite interval will be zero. 
\begin{example}\label{G:ex:1}
Let $F$ be an absolutely continuous distribution on $[0,\infty)$ with infinite mean. 
 For any $c\geq0$, consider the truncated distribution $F_{c}(x) :=\P\[1_{X\leq c}X \leq x\],\, t\in\R$, where $X\sim F$, as well as the truncated mean
$$
\mu: [0,\infty)\ni c\mapsto\int x\mathrm{d}F_{c}(x).
$$ 
The function $\mu$ is non-decreasing and continuous with $\lim_{c\to\infty}\mu(c) = \infty$. Thus, we may define $\mu^{\leftarrow}:[0,\infty)\ni y\mapsto \inf\{c\geq 0:\ \mu(c) = y\},\ y\geq 0.$ Set $c:[0,1)\ni m\mapsto \mu^{\leftarrow}((1 - m)^{-1})$. The function $c$ is increasing and $\lim_{m\uparrow 1}c(m) = \infty$. For $m\in[0,1)$, let $N^{(m)}$ be a (subcritical) Hawkes process with branching coefficient $m\in[0,1)$, displacement distribution $F$, and stationary renewal immigration, where $F_{c(m)}$ is the interarrival distribution of the immigrants. This construction yields  $\mu(c(m))^{-1} = (1 - m)$ for the average intensity of the immigrants. Consequently, we obtain an average intensity $1$ for the resulting Hawkes processes $\{N^{(m)}\}_{ m\in[0,1)}$. In this construction, we may obtain arbitrary average intensities $\lambda>0$ by scaling the interarrivals of the immigrants by $\lambda^{-1}$.
\end{example}
The weak limit as $m\uparrow 1$ of the processes $\{N^{(m)}\}$ from Example~\ref{G:ex:1} can be studied in a similar manner as the limit of the construction in \citet{bremaud01}. The interpretation, however, is different. We provide another example starting at time 0, where we specify the tails of the distributions as regularly varying:
\begin{example}\label{G:ex:2}
For $i=1,2,$ let $F_i$ be a distribution on $[0, \infty)$ such that
$$
1 - F_i(x) \sim \frac{l_i(x)}{ x^{\alpha_i}\Gamma(1 + \alpha_i)},\quad x\to \infty,
$$
where $\alpha_i\in(0,1]$ and $l_i$ is slowly varying at infinity. We denote the renewal functions induced from $F_i$ by $U_i$. Consider a renewal process on $[0,\infty)$ with interarrival distribution $F_{1}$. From each renewal epoch, we start a Hawkes family process with displacement distribution $F_2$. Note that a generic Hawkes family (starting with an ancestor in 0) has mean measure
$
 U([0,x]) := \sum_{g\in\N_0} F^{g*}(x).
$ 
(This can be shown by considering the generations separately.)
For $x\in[0,\infty)$, denote the expected number of points  in $[0,x]$ of the resulting point process by $\overline{U}(x)$. For any non-negative function $G$ of bounded variation, we denote its Laplace--Stieltjes transform by
$\hat{G}:[0,\infty)\ni s \mapsto\int_{0}^\infty e^{-sx}\mathrm{d}G(x)\,(\in[0,\infty])$. We have that
$\overline{U} = U_1 * U_2$ and, consequently, 
\begin{align}
\hat{\overline{U}}(s)  =  \frac{1}{1 - \hat{F}_1(s)}\frac{1}{1 - \hat{F}_2(s)} \quad s\in [0,\infty).
\end{align}
From \citet{bingham87} (BGT), Corollary 8.1.7, we get for $i=1,2$ that $1 - \hat{F}_i(s) \sim s^{\alpha_i} l_i(1 / s)$ as $s\downarrow 0$, . Thus, we obtain
$$
\hat{\overline{U}}(s) \sim \frac{1}{s^{\alpha_1 + \alpha_2} l_1(1 / s)l_2(1 / s)},\quad s\downarrow 0.
$$
As $(l_1(\cdot)l_2(\cdot))^{-1}$ is again slowly varying, we may apply Karamata's Tauberian Theorem (Theorem 1.7.1 in BGT) to find
\begin{align}
\overline{U}(x) \sim \frac{x^{\alpha_1 + \alpha_2}}{ l_1(x) l_2(x)\Gamma(1 + \alpha_1 + \alpha_2)},\quad x\to\infty.
\end{align}
Thus, setting $\alpha_1:=\alpha\in[0,1)$ and $\alpha_2 := 1- \alpha$ and choosing $l_1$ and $l_2$ such that $\lim_{x\to\infty}(l_1(x)l_2(x))^{-1} = \lambda >0$, we get an `elementary renewal behaviour' for $\overline{U}$:
\begin{align}
\lim\limits_{x \to \infty}\frac{\overline{U}(x)}{x} =  \lim\limits_{x\to\infty} \frac{x^{\alpha + 1 - \alpha}}{ x l_1(x) l_2(x)\Gamma(1 + \alpha + 1 - \alpha)}=\lim\limits_{x\to\infty} \frac{1}{ l_1(x) l_2(x)}=\lambda.
\end{align}
That is, for all $\alpha\in[0,1)$, the averaged expectation of such critical Hawkes processes with renewal immigration on $[0,\infty)$ converges. Naturally, this does not imply distributional convergence. Furthermore, the symmetry in $i =1,2$ is only valid for the mean measure. Also note that $\alpha_1 = \alpha$ controls the interarrivals of the immigrant points and $\alpha_2 = 1 - \alpha$ controls the displacement of offspring points. Intuitively, we want the distance between the immigration points to be `larger' than between offspring points---otherwise, the offspring processes thin out faster than the immigrant process and thus vanish. In other words, for survival of the limit process, we will  need $\alpha_1 <\alpha_2$, equivalently, $\alpha < 1 - \alpha$ and, thus, $\alpha \in(0, 0.5)$---as in \eqref{G:eq:bremaud_conditions}. \end{example}

\subsection{Backward tree and Palm process}\label{G:sec:backward_tree}
We build a `Palm version' of a critical $(F,\lambda)$-Hawkes process by reconstructing the process starting from some fixed point. Here, the role of the symmetrized distribution $\tilde{F}$ is most obvious. We use an approach similar to the `method of backward trees'; see \citet{kallenberg77} or \citet[page 336]{daley03}. We pick an arbitrary point of a critical Hawkes process and shift the whole process in such a way that this arbitrary point has position 0. Obviously, from this point, we may walk back to its parent, then to its grandparent, etc. In other words, there is an infinite spine in the underlying (backwards) tree. In this way, we construct a Palm version of a critical $(F,\lambda)$-Hawkes process with (not necessarily absolutely continuous) displacement distribution $F$:
\begin{enumerate}[label = \alph*)]
\item  Fix a single special point at position 0 and attach a Hawkes family to it.
\item Generate its parent at position $-X_1$, with $X_1 \sim F$ and attach a Hawkes family to this parent.
\item Generate its grandparent at position $-X_1 - X_2$ with $X_2\sim F$ and attach Hawkes family to this grandparent.
\item Continue in this way.
\end{enumerate}
All random variables applied in this construction are chosen independent. As the construction is non-decreasing (when counting points in a fixed bounded Borel set), it will yield a random (possibly infinite) point measure $N_0$ on $\R$ with (possibly infinite) mean measure $U_0$. The limit process can be described as follows: the infinite spine of ancestors of the starting point forms a backward renewal process, with interarrival distribution ${F}_-$, where $F_-$ denotes the distribution function of the random variable $-X$, where $X\sim F$. This ancestor process has locally finite mean measure
$
 U_- := \sum_{g\in\N_0} F_-^{g*}.
$
(For any distribution $G$, we write $G((a,b]) := G(b) - G(a),\,a < b,$ and use the notion `$G$' as well for the distribution function as for the measure it defines.)
Each renewal point marks the start of a new (forward) Hawkes family.  
Thus, we obtain for the mean measure of the limiting process $N_0$
\begin{align}\label{G:eq:U_0}
U_0   & = U_{-} * U =
\sum_{g\in\N_0}\sum_{g'\in\N_0} F_-^{g*} *F^{g'*} = \sum_{g\in\N_0}\sum_{g' \geq g} F_-^{g*} *F^{g'*} + \sum_{g\in\N_0}\sum_{g' < g} F_-^{g*} *F^{g'*}.
\end{align}
Note that a random walk starting in 0 with step-size distribution $\tilde{F}$, the symmetrized version of $F$, has mean measure
\begin{align}
\tilde{U} :=  \sum_{g\in\N_0} \tilde{F}^{g*} =   \sum_{g\in\N_0} (F_{-}*{F})^{g*}  = \sum_{g\in\N_0} F_{-}^{g*} *{F}^{g*}.\label{G:eq:transience}
\end{align}
Consequently, for the first summand of the right-hand side in $\eqref{G:eq:U_0}$, we obtain
\begin{align*}
 \sum_{g\in\N_0}\sum_{g' \geq g} F_-^{g*} *F^{g'*}=\sum_{g\in\N_0}\sum_{g' \geq g} F_-^{g*} *F^{g*} * F^{*(g' - g)}
=
\sum_{g\in\N_0} F_-^{g*} *F^{g*} *\sum_{g' \geq g} F^{*(g' - g)}
=  \tilde{U}  *U .
\end{align*}
After a similar calculation for the second summand, we finally get that
\begin{align}
U_0 = \tilde{U}  *(U  +  U_- - \delta_0).\label{G:eq:palm_measure}
\end{align}
Note that $U_0$ is a symmetric measure (as a convolution of symmetric measures), and that $U_0(\{0\})\geq 1$---with equality if $F$ is absolutely continuous. These two facts were already visible in the first expression of \eqref{G:eq:U_0}. More importantly, we learn from \eqref{G:eq:palm_measure} that $U_0$ coincides with  the expectation of the occupation measures of infinitely many random walks, with each walk starting at renewal times of a two-sided renewal process (with a single renewal in 0). Local finiteness of $U_0$ means that the construction yields a point process with finite intensity whose law can be identified with a Palm distribution. We summarize:
\begin{conjecture}
If ${F}$ is transient, then $N_0$ defines a point process (that is, a random locally finite counting measure) with distribution $L_0$ and $\lambda L_0$ is the Palm measure of a stationary point process with finite average intensity $\lambda$---the critical $(\lambda,F)$-Hawkes process.
\end{conjecture}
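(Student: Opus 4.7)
My plan is to prove the conjecture in three stages: first, under transience of $\tilde{F}$, establish that $N_0$ is almost surely a locally finite counting measure; second, apply a Palm inversion to produce a stationary process $N$ whose Palm measure is $\lambda L_0$; third, verify that $N$ solves~\eqref{G:eq:critical_equation}, so that by Theorem~\ref{G:thm:necessary_conditions} it is the critical $(F,\lambda)$-Hawkes process with average intensity $\lambda$.

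For the first stage, the decomposition $U_0 = \tilde{U}*(U + U_- - \delta_0)$ from~\eqref{G:eq:palm_measure} reduces the problem to local finiteness of the three factors. Transience of $\tilde{F}$ is, by the Chung--Fuchs theorem, equivalent to local finiteness of the potential measure $\tilde{U} = \sum_{g\geq 0}\tilde{F}^{g*}$. The one-sided renewal measures $U$ and $U_-$ are locally finite whenever $F$ is non-degenerate on $[0,\infty)$. Local finiteness of $U_0$ then follows by convolution, and from $\E N_0(B) = U_0(B)<\infty$ one concludes that $N_0(B)<\infty$ almost surely for every bounded Borel $B$.

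For the second stage, I would appeal to Mecke's characterization of Palm measures: a $\sigma$-finite measure $\Pi$ on $M_p$ concentrated on counting measures with an atom at the origin is, up to a scaling factor, the Palm measure of a unique stationary point process---recovered via the classical inversion formula---if and only if it satisfies an intrinsic point-shift invariance identity. Taking $\Pi=\lambda L_0$, the required identity reduces to the assertion that re-centering $N_0$ at an arbitrary one of its own points produces a random measure distributed like $N_0$. I would verify this by fixing a point $T$ of $N_0$ and tracing the unique genealogical path in the backward tree from $0$ to $T$---an ancestral segment up the spine followed by a descent into a Hawkes family. At each vertex along the path, independence of the Poisson offspring and the symmetry $\tilde{F}(A)=\tilde{F}(-A)$ allow one to exchange ancestor and descendant while regenerating independent subtrees, so that after all swaps the genealogy rooted at $T$ is distributed exactly as the original one rooted at $0$.

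The third stage is then structurally transparent: the backward spine together with the Hawkes families attached to each of its vertices is by construction the Palm version of a stationary process satisfying the critical cluster identity $N\eqd N_{[F]}$; intensity $\lambda$ and ergodicity follow from the Palm inversion and the independence built into the Poisson offspring. The main obstacle is the point-shift invariance in the second stage: in contrast to the subcritical setting, no a priori stationary process exists from which to read this identity off, so the proof must proceed intrinsically from the branching construction, with the symmetry of $\tilde{F}$ playing the same pivotal role here as in the derivation~\eqref{G:eq:palm_measure} of $U_0$.
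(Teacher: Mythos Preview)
The paper does not prove this statement---it is explicitly a conjecture, and immediately after stating it the authors write that they ``were not able to show local finiteness of the measure $U_0$ from~\eqref{G:eq:palm_measure} in such full generality.'' So there is no proof in the paper to compare against; instead one must judge whether your proposal actually closes the gap the authors could not.

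It does not, and the failure is in your first stage. You assert that local finiteness of $U_0=\tilde U*(U+U_--\delta_0)$ follows from local finiteness of the factors. This is false: convolution of two locally finite measures of infinite total mass on $\R$ is in general not locally finite. In fact the paper's own Example~\ref{G:ex:3} furnishes a counterexample to your argument. There, for $1-F(x)\sim x^{-\alpha}l(x)$ with $\alpha\in(0,1)$, the symmetrized walk is transient (so $\tilde U$ is locally finite), and $U$, $U_-$ are locally finite renewal measures; yet the computation in~\eqref{G:eq:two_sided_renewal_convolution} shows that $U_0([0,h])$ behaves like $\int^\infty x^{2(\alpha-1)}\,\d x$, which diverges for every $\alpha\in[0.5,1)$. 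Hence transience of $\tilde F$ alone does \emph{not} force $U_0$ to be locally finite, and any first-moment route to almost-sure local finiteness of $N_0$ is blocked in that regime. A genuine proof of the conjecture (if it is true) would have to establish $N_0(B)<\infty$ a.s.\ without passing through $\E N_0(B)<\infty$.

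Your second and third stages are programmatic rather than proofs. The point-shift invariance you invoke for Mecke's characterization is not the naive ``recentre at an arbitrary point'' statement; one needs the mass-transport / point-stationarity identity, and the path-swapping heuristic you describe does not by itself deliver it---in particular, picking a point $T$ in a size-biased manner from the family of a spine vertex is not the same as conditioning on $T$ being a typical point of $N_0$. These stages would require substantial additional argument even after the local-finiteness issue is resolved.
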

We were not able to show local finiteness of the measure $U_0$ from $\eqref{G:eq:palm_measure}$ in such full generality. Instead, we supply another example in the case of a displacement distribution $F$ with regularly varying tails:
\begin{example}\label{G:ex:3}
Let $F(x)\sim x^{-\alpha} l(x), \, x\to \infty,$ with $l$ slowly varying at infinity and $\alpha\in(0,1]$. For all $h>0$, we have that
\begin{align}\label{G:eq:two_sided_renewal_convolution}
U_0([0,h]) = U_-*U([0,h])  &= \int_{(-\infty,0]} U\big([-x, h-x]\big) U_-(\mathrm{d} x)
=  \int_{[0, \infty)} U\big([x -h , x]\big) U(\mathrm{d} x),
\end{align}
where we use in the last equality that $U_-(B) = U(-B),\, B\in\mathcal{B}_b(\R).$ For $x\to\infty$, the integrand is of the same order as $x^{\alpha - 1}/l(x)$ (apart from a set of measure 0) by Theorem 8.6.6 in BGT. For the density $u$ of $U$ (assuming it exists and is ultimately monotone), we have that $x\sim \alpha l(x) x^{\alpha - 1},\, x\to\infty,$ by the Monotone Density Theorem; see Theorem~1.7.2 in BGT. Thus, for large $x$, the integrand with respect to Lebesgue measure is of the same or lower order as $x^{2(\alpha - 1)}$ for $x\to\infty$. Consequently, we find that $U_0$ defines a locally bounded measure if $\alpha \in (0, 0.5)$---as in \eqref{G:eq:bremaud_conditions}
\end{example}

\subsection{Kesten tree}
The constructions of the critical Hawkes process in Sections~\ref{G:sec:renewal_immigration} and~\ref{G:sec:backward_tree} are related to so-called \emph{Kesten trees} or \emph{size-biased trees}, a generalization of Galton--Watson trees, where the nodes are either of a \emph{normal} or of a \emph{special} type; see \citet{lyons95}. The distribution of the (independent) offspring operations of the nodes depends on their type. If $(p_k)$ is the offspring distribution of a normal node and $m\in(0,\infty)$ the corresponding expectation, then $(kp_k/m)$ is the \emph{size-biased offspring distribution} of a special node. In our case, where the normal offspring distribution is $\Pois(1)$, one can check that the size bias  corresponds to adding $+1$ to a $\Pois(1)$ random variable. The root node $\emptyset$ of a Kesten tree is special. Normal nodes have only normal children whereas special nodes have exactly one special child and otherwise normal children. Obviously, such a Kesten tree never dies out. In fact, it is well known that a Kesten tree is distributed like the corresponding Galton--Watson tree conditional on non-extinction. Denote such a Kesten tree (with respect to $\Pois(1)$ normal offspring and `$\Pois(1) + 1$' special offspring) by $\bf T$, its nodes by $\{\sigma\}_{\sigma\in \bf T}$, and its root node by $\emptyset$. We write $\sigma^-$ for the unique parent node of $\sigma\in\bf T\setminus\{\emptyset\}$. Every node is supplied with a position in $\R$ in a recursive (random) way:   
\begin{align}\label{G:eq:kesten_construction1}
S_\emptyset := 0,\quad 
S_\sigma :=\begin{cases} 
S_{\sigma^-} + Y^{(1)}_{\sigma},&\text{if $\sigma\in\bf T$ is a normal node and }\\
 S_{\sigma^-} + Y^{(2)}_{\sigma},&\text{if $\sigma\in\bf T$ is a special node and $\sigma\neq\emptyset$,}
 \end{cases}
\end{align}
where $Y^{(i)}_{\sigma}\sim F_i,\, i = 1,2,\,\sigma\in\bf T$, are independent. The distribution $F_1$ coincides with $F$, the displacement distribution; the distribution $F_2$, $F_2(0) = 0,$ controls the desired limiting average intensity $\lambda$.
The chain of nodes along the special nodes of $\bf T$ form an infinite spine. The position of these nodes along the infinite spine correspond to the immigrant renewal process from Section~\ref{G:sec:renewal_immigration}. Obviously, one could represent the specific constructions from Examples~\ref{G:ex:1} and \ref{G:ex:2} in a similar manner as \eqref{G:eq:kesten_construction1}. \par
Similarly, the backward or Palm construction from Section~\ref{G:sec:backward_tree} may be written in terms of Kesten trees. The underlying tree is exactly the same as for the case with renewal immigration in \eqref{G:eq:kesten_construction1}. However, the position labels change. Namely, we set
\begin{align}\label{G:eq:kesten_construction2}
S_\emptyset := 0,\quad 
S_\sigma :=\begin{cases} 
S_{\sigma^-} + Y_{\sigma},&\text{if $\sigma$ is a normal node and }\\
S_{\sigma^-} - Y_{\sigma},&\text{if $\sigma$ is a special node and $\sigma\neq\emptyset$,}
\end{cases}
\end{align}
where $Y_{\sigma}\sim F,\,$ iid, with $F$ the displacement distribution.
Thus, for studying the distribution of a critical Hawkes process, we may apply limit theorems for Kesten trees to its genealogical structure and then---with the tree given---study the positions separately by standard renewal or random walk theory.

\section{Discussion}
The presented methods open the door for discussions of multitype critical Hawkes processes as well as of critical integer-valued autoregressive time series:
\subsection{Critical multitype Hawkes processes}
The Poisson embedding, the connection to critical cluster fields, as well as the renewal immigration representation and the Palm construction allow for multitype generalizations. These generalizations will involve regime switching renewal processes and/or regime switching random walks. Algebraic properties of the branching matrix such as irreducability will be important. Results on multitype critical cluster fields as in \citet{ivanoff82} can be applied. A possible formalization for multiple size-biased trees is given in \citet{kurtz97}.

\subsection{Critical autoregressive time series}\label{G:sec:citical_INAR} In the same way as this paper analyzes critical monotype Hawkes processes, one could discuss critical univariate integer-valued autoregressive time series of infinite order---that is, critical INAR($\infty$) processes; see \citet{kirchner_hawkes_inar} for the subcritical case and for the INAR--Hawkes relation. More explicitely, one could consider time series $(X_n)$ solving
\begin{align}\label{G:eq:inar}
X_n = \sum_{k = 1}^\infty \sum_{l = 1}^{X_{n-k}} \xi^{(\alpha_k)}_{n,k}, \quad  \E X_n\equiv \lambda>0, \quad n\in\Z, 
\end{align}
for some independent random variables $\{\xi^{(\alpha_k)}_{n,k}\}$ with $\xi^{(\alpha_k)}_{n,k}\sim\Pois(\alpha_k),\,\alpha_k\geq0, k\in\N,\, n\in\Z$ and $\sum_{k\in\N}\alpha_k = 1$. Arguing in a similar manner as in Section \ref{G:sec:perspectives_on_critical_hawkes_processes} for the Hawkes process, we may rewrite \eqref{G:eq:inar} in terms of a (critical) cluster operation with offspring distribution $\Pois(1)$ and displacement distribution $(\alpha_k)$. The link between the clustering and the counting variables $(\xi_{ n,k})$ in \eqref{G:eq:inar} is provided by the property 
\begin{align}\label{G:eq:inar_counting}
\xi_{n,k} := \#\{Y_{n,l} = k:\, l = 1, \dots, K_n\} \sim \Pois(\alpha_k),\quad \text{independently over } k\in \N,\,n\in\Z,
\end{align}
when $\{K_n,Y_{n,l}:\, l\in\N,\, n\in\Z\}$ are independent random variables with $K_n\sim\Pois(1)$ and $Y_{n,l}\sim(\alpha_k)$. (In \eqref{G:eq:inar_counting}, we use the convention that $K_n = 0 \Rightarrow \xi_{n,k} = 0$.) In analogy to Theorem~\ref{G:thm:necessary_conditions}, one can then show that solutions to \eqref{G:eq:inar} necessarily specify a unique stationary time series distribution. Thus, our conjecture is that such a critical INAR($\infty$) process exists if and only if the symmetric random walk with step size distribution $(\sum^{\infty}_{l=1}\alpha_{l}\alpha_{k+l})_{k\in\Z}$ ($\alpha_k := 0, k\leq 0$) is transient. Note that we may then argue as after Conjecture~\ref{G:conj:1} that necessarily $\sum_{k = 1}^\infty\alpha_kk= \infty$. So in particular, we need $\alpha_k>0$, infinitely often, and a critical INAR($p$) process with $p <\infty$ cannot exist.
It is interesting to study the more familiar autoregressive representation of \eqref{G:eq:inar}, namely
\begin{align}\label{G:eq:ar}
X_n = \sum_{k = 1}^\infty \alpha_k{X_{n-k}} + u_n,\quad n\in\Z,
\end{align}
with $u_n := \sum_{k = 1}^\infty \sum_{l = 1}^{X_{n-k}} \xi^{(\alpha_k)}_{n,k}- \sum_{k = 1}^\infty \E \sum_{l = 1}^{X_{n-k}} \xi^{(\alpha_k)}_{n,k}$.  The innovations $(u_n)$ are stationary and have zero marginal means. In addition, one can show that $\Cov(u_n, u_{n'}) = 0,\, n\neq n'$. In other words, the critical INAR($\infty$) time series is a nontrivial critical (=`unit root') \emph{and stationary} autoregressive process. This stands in putative contradiction to standard time series theory; see e.g. Exercise 4.28.*  in \citet{brockwell91}. However, note that stationarity does not imply `weak stationarity', respectively, `covariance stationarity' (the meaning of `stationarity' in the mentioned exercise) if $\Var(X_n)=\infty$. We conclude that $X_n$ as in \eqref{G:eq:inar} or in \eqref{G:eq:ar} has infinite variance. Note that $\Var(u_n)<\infty$ might still be possible because the conclusion `$\Var(Z_1)=\Var(Z_2) = \infty\Rightarrow\Var(Z_1 + Z_2)=\infty$' is in general wrong. 

\subsection{Conclusion}
We have identified the distribution of a critical Hawkes process: it coincides with the distribution of a cluster-invariant point process. From Theorem~\ref{G:thm:necessary_conditions}\ref{G:item:dbn_limit}, we get that this distribution can be constructed by starting with a Poisson random field of ancestors, then applying iterated clustering---only considering children, then only grandchildren, then only great-grandchildren, etc. In other words, the points of a critical Hawkes process are related like `cousins of a very, very high degree'. 
\bibliographystyle{apa}
\bibliography{/Users/matthiaskirchner/Dropbox/ETH/papers/bibliographies/diss.biblio}
\end{document}